\definecolor{pdflinkcolor}{rgb}{.1,.1,.6}	
\definecolor{pdfcitecolor}{rgb}{.6,.1,.1}	
\definecolor{pdfanchorcolor}{rgb}{0,1,0}	
\definecolor{pdfurlcolor}{rgb}{.1,.6,.1}	
\definecolor{pdfpagecolor}{rgb}{0,0,1}		
\definecolor{pdffilecolor}{rgb}{1,0,0}		
\newtheorem{theorem}{Theorem}[section]
\newtheorem{lemma}[theorem]{Lemma}
\newtheorem{cor}[theorem]{Corollary}
\theoremstyle{definition}
\newtheorem{example}[theorem]{Example}
\theoremstyle{remark}
\numberwithin{equation}{section}
\newcommand{\op}{\operatorname}
\newcommand{\cob}{\mathscr{S}}
\newcommand{\id}{\mathrm{id}}
\def\ZZ{\mathbb{Z}}
\def\NN{\mathbb{N}}
\def\CC{\mathbb{C}}
\DeclareMathOperator{\SL}{SL}
\title{\bf On the density of certain languages with $p^2$ letters}
\author{
Carlos Segovia\footnote{
Mathematisches Institut,
Universit\"at Heidelberg, Germany,\newline
csegovia@mathi.uni-heidelberg.de}\, and\,
Monika Winklmeier\footnote{
Departamento de Matem\'aticas, Universidad de los Andes, Colombia,\newline
mwinklme@uniandes.edu.co}
}
\date{July 16, 2015}
\begin{document}
\maketitle

\begin{abstract}
   The sequence $(x_n)_{n\in\NN} = (2,5,15,51,187,\ldots)$ given by the rule $x_n=(2^n+1)(2^{n-1}+1)/3$ appears in several seemingly unrelated areas of mathematics.
For example, $x_n$ is the density of a language of words of length $n$ with four different letters.
It is also the cardinality of the quotient of $(\ZZ_2\times \ZZ_2)^n$ under the left action of the special linear group $\SL(2,\ZZ)$.
In this paper we show how these two interpretations of $x_n$ are related to each other.
More generally, for prime numbers $p$ we show a correspondence between a quotient of $(\ZZ_p\times\ZZ_p)^n$ and a language with $p^2$ letters and words of length~$n$.
\medskip

{\small Mathematics Subject Classifications: 37F20, 57Q20, 05E15, 68R15}
\end{abstract}

\section{Introduction}
When we looked for the sequence, listed as sequence \href{https://oeis.org/A007581}{A007581} in the Online Encyclopedia of Integer Sequences \cite{oeis},
\begin{align}
\label{eq:sequence}
(x_n)_{n\in\NN}
= \left( (2^n+1)(2^{n-1}+1)/3 \right)_{n\in\NN}
= (2,\,5,\,15,\,51,\,187,\,\ldots)
\end{align}
we were greatly surprised to find that it arises in many different contexts.
Indeed, there exist at least four distinct areas where the sequence appears:

\newcommand\mylabelenumi[1]{{\upshape (#1)}}
\newcommand\enumiref[1]{\mylabelenumi{\ref{#1}}}
\renewcommand\labelenumi{\mylabelenumi{\theenumi}}

\begin{enumerate}

   \item\label{ca3}
   the density of a language with four letters (see \cite{mor});

   \item\label{ca1}
   the dimension of the universal embedding of the symplectic dual polar space (see \cite{brow});

   \item\label{ca2}
   the number of isomorphism classes of regular fourfold coverings of a graph with Betti number $n$ and with voltage group $\ZZ_2 \times \ZZ_2$ (see \cite{jin});

   \item\label{ca4}
   the rank of the $\ZZ_2^n$-cobordism category in dimension 1+1 (see \cite{carlos1}).

\end{enumerate}

In combinatorial theory, \enumiref{ca1} was known as Brouwer's conjecture.
It was proved by P.~Li (see \cite{li}) and independently by A.~Blokhuis and A.~E.~Brouwer (see \cite{brow}).
This former conjecture states that the dimension of the universal embedding of the symplectic polar space $\mathrm{Sp}_{2n}(2)$ is $(2^n+1)(2^{n-1}+1)/3$.

In \cite{carlos2}, the first author constructs a function which relates
\eqref{ca1} and \eqref{ca3}.
In this paper, in Theorem~\ref{thm:fundgroup}, we establish a relation between 
\enumiref{ca3} and
\enumiref{ca4}.
The relation between \enumiref{ca2} and \enumiref{ca4} will be considered in a future work.
\smallskip

The paper is organised as follows.
In Section~\ref{sec1} we briefly describe
\enumiref{ca3} and \enumiref{ca4}
and give yet another interpretation of the sequence in Section~\ref{subsec:Z} as the number of orbits of $(\ZZ_2\times\ZZ_2)^n$ under the left action of $\SL(2,\ZZ)$.
In Section~\ref{sec:bijection} we give a bijection between these interpretations.
Finally, in Section~\ref{sec3} we show that the sequence \eqref{eq:sequence} appears also
in certain point-set geometries, in cobordism categories and in topological field theory.
\smallskip

Throughout the paper, the cardinality of a finite set $M$ is denoted by $|M|$.

\section{Interpretations of the sequence}\label{sec1}
In this section we will present three instances where the sequence
\eqref{eq:sequence} naturally appears.
For other interpretations of the sequence and their relation to the ones given above, we refer to \cite{carlos2} and \cite[sequence A007581]{oeis}.

\subsection{Density of a language}

For a prime number $p$ we consider a language with $p^2$ letters $0,1,2,\dots,
p^2 -1$.
For $n\in\NN$ we define the set $W_p^n$ as the set of words $a_1 a_2 \dots a_n$
of length $n$ such that there exist $1\le j<k$ with:
\begin{itemize}
   \item[(R1)] $a_i = 0$ for $i < j$,
   \item[(R2)] $a_j = 1$,
   \item[(R3)] $a_i \in\{0, 1, \dots, p-1\}$ for $j<i<k$,
   \item[(R4)] $a_k \in\{p, 2p, 3p, \dots, (p-1)p\}$ if $k\le n$,
   \item[(R5)] $a_i \in\{0, 1, \dots, p^2-1\}$ for $i>k$ if $k< n$.
\end{itemize}
\smallskip

\noindent
We call $|W_p^n|$ the \emph{density} of the language with $p^2-1$ letters and words of length~$n$.

\begin{example}[Moreira and Reis \cite{mor}]
Let us consider the case $p=2$.
The letters of our language are $0,1,2,3$ and a word $a_1\dots a_n$ belongs
to $W_2^n$ if and only if its letters satisfy $0\le a_i\le \max_{j< i}\{a_j\}+1$.
For instance,
\begin{itemize}
   \item
   $W_2^1 = \{0, 1\}$,
   \item
   $W_2^2 = \{00, 01, 10, 11, 12 \}$,
   \item
   $W_2^3 = \{000, 001, 010, 100, 011, 110, 101, 111, 012, 112, 121, 122, 120, 102, 123 \}$.
\end{itemize}
Observe that
$|W_2^1| = 2 = x_1$,
$|W_2^2| = 5 = x_2$,
and $|W_2^3| = 15 = x_3$,
where $x_n$ is the sequence defined in \eqref{eq:sequence}.
In Corollary~\ref{cor:MoreiraReisFormula} we will prove that
$|W_2^n| = x_n$ for all $n\in\NN$.

\end{example}

\subsection{$(\ZZ_p\times \ZZ_p)^n/\SL(2,\ZZ)$}
\label{subsec:Z}
Let us consider the usual left action of $\SL(2,\ZZ)$ on the vector space $(\ZZ_p\times\ZZ_p)^n$.
Vectors in this vector space are denoted by
$\begin{pmatrix}u\\v
\end{pmatrix}$
or $(u,v)^t$ where
$u = (u_1, u_2, \dots, u_n),\
v = (v_1, v_2, \dots, v_n)\in \ZZ_p^n$ are line vectors.
%
%
Observe that $\SL(2,\ZZ)$ is generated by the matrices
$\begin{pmatrix} 0& 1 \\ -1 & 0
\end{pmatrix}$
   and
$\begin{pmatrix} 1& 1 \\ 0 & 1
\end{pmatrix}$.
Two elements
$(u,v)^t,\, (x,y)^t \in (\ZZ_p\times\ZZ_p)^n$
belong to the same orbit if they can be transformed into each other by line transformations given by elements of $\SL(2,\ZZ)$, that is we may change one line with the negative of another one,
or sum a multiple of one line to another.
We call $(u,v)^t,\, (x,y)^t\in (\ZZ_p\times\ZZ_p)^n$ \emph{equivalent} if they belong to the same orbit. Clearly this gives an equivalence relation.
The equivalence class or the orbit containing $(u,v)^t$ is denoted by $[(u,v)^t]$.
Note that any two given orbits are either equal or disjoint.

Let us consider the example $p=2$.
For $n=1$, there are the two orbits $\{(0,0)^t\}$ and $\{ (1,0)^t, (0,1)^t, (1,1)^t \}$.
For $n=2$, there are the five orbits
\begin{align*} 
   & \left\{
   \begin{pmatrix}
      0&0 \\ 0&0
   \end{pmatrix}
   \right\},
   \\[1ex]
   & \left\{
   \begin{pmatrix}
      0&0 \\ 0&1
   \end{pmatrix},
   \begin{pmatrix}
      0&1 \\ 0&0
   \end{pmatrix},
   \begin{pmatrix}
      0&1 \\ 0&1
   \end{pmatrix}
   \right\},
   \\[1ex]
   & \left\{
   \begin{pmatrix}
      1&0 \\ 0&0
   \end{pmatrix},
   \begin{pmatrix}
      0&0 \\ 1&0
   \end{pmatrix},
   \begin{pmatrix}
      1&0 \\ 1&0
   \end{pmatrix}
   \right\},
   \\[1ex]
   & \left\{
   \begin{pmatrix}
      1&1 \\ 0&0
   \end{pmatrix},
   \begin{pmatrix}
      0&0 \\ 1&1
   \end{pmatrix},
   \begin{pmatrix}
      1&1 \\ 1&1
   \end{pmatrix}
   \right\},
   \\[1ex]
   & \left\{
   \begin{pmatrix}
      1&0 \\ 0&1
   \end{pmatrix},
   \begin{pmatrix}
      1&1 \\ 0&1
   \end{pmatrix},
   \begin{pmatrix}
      1&0 \\ 1&1
   \end{pmatrix},
   \begin{pmatrix}
      0&1 \\ 1&0
   \end{pmatrix},
   \begin{pmatrix}
      0&1 \\ 1&1
   \end{pmatrix},
   \begin{pmatrix}
      1&1 \\ 1&0
   \end{pmatrix}
   \right\}.
\end{align*}
Note that for these examples the number of orbits coincides with the number of words of the type described in the section before.
In Section~\ref{sec:bijection} this will be proved rigorously.

\section{Bijection between $W_p^n$ and $(\ZZ_p\times \ZZ_p)^n/\SL(2,\ZZ)$} 
\label{sec:bijection}

In this section we construct a bijection from $W_p^n$ to $(\ZZ_p\times \ZZ_p)^n/\SL(2,\ZZ)$.
As a corollary we obtain that both sets have the same cardinality whose value will be calculated in the next section.

\begin{theorem}
   \label{thm:bijection}
   Let $p$ be a prime number and $n\in\NN$.
   Then there exists a bijection between the sets $W_p^n$ and $(\ZZ_p\times\ZZ_p)^n/\SL(2,\ZZ)$.
\end{theorem}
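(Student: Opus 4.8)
The plan is to regard $(\ZZ_p\times\ZZ_p)^n$ as the set of $2\times n$ matrices over the field $\ZZ_p$, identifying $(u,v)^t$ with the matrix whose $i$-th column is $\begin{pmatrix}u_i\\v_i\end{pmatrix}$, and to reduce each orbit to a unique normal form by the $\SL(2,\ZZ)$-row operations. First I would note that the action factors through the reduction homomorphism $\SL(2,\ZZ)\to\SL(2,\ZZ_p)$, which is surjective, so the orbits in question are exactly the orbits of the induced $\SL(2,\ZZ_p)$-action, and from now on I work over the field $\ZZ_p$. Next I encode each column $\begin{pmatrix}x\\y\end{pmatrix}$ by the single letter $x+py\in\{0,1,\dots,p^2-1\}$, turning a $2\times n$ matrix into a word of length $n$ and conversely. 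Under this dictionary the zero column is the letter $0$, the column $\begin{pmatrix}1\\0\end{pmatrix}$ is the letter $1$, a column $\begin{pmatrix}x\\0\end{pmatrix}$ with $x\in\{0,\dots,p-1\}$ is a letter in $\{0,\dots,p-1\}$, a column $\begin{pmatrix}0\\y\end{pmatrix}$ with $y\neq 0$ is a nonzero multiple of $p$, and an unconstrained column is an unconstrained letter.

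The heart of the proof is to produce a canonical representative for each orbit, for which I would use two standard facts about $\SL(2,\ZZ_p)$: it acts transitively on $\ZZ_p^2\setminus\{0\}$, and the stabiliser of $\begin{pmatrix}1\\0\end{pmatrix}$ is the unipotent subgroup $U=\left\{\left(\begin{smallmatrix}1&b\\0&1\end{smallmatrix}\right):b\in\ZZ_p\right\}$, which sends $\begin{pmatrix}x\\y\end{pmatrix}$ to $\begin{pmatrix}x+by\\y\end{pmatrix}$. Given a nonzero matrix, let $j$ be the index of its first nonzero column; since every $g\in\SL(2,\ZZ_p)$ preserves the set of zero columns, $j$ is an orbit invariant, and by transitivity I may move the $j$-th column to $\begin{pmatrix}1\\0\end{pmatrix}$. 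The remaining freedom is exactly $U$, which fixes all second coordinates $y_i$; hence the first index $k>j$ with $y_k\neq 0$, when it exists, is again an orbit invariant. If such a $k$ exists, the unique choice $b=-x_k/y_k$ normalises the $k$-th column to $\begin{pmatrix}0\\y_k\end{pmatrix}$ and exhausts the freedom in $U$; if no such $k$ exists, $U$ already acts trivially on the whole matrix. Either way the representative is completely determined: the columns strictly between $j$ and $k$ are $\begin{pmatrix}x_i\\0\end{pmatrix}$ with $x_i$ free, the $k$-th column is $\begin{pmatrix}0\\y_k\end{pmatrix}$ with $y_k\neq 0$, and the columns after $k$ are arbitrary, which is precisely (R1)--(R5) under the letter encoding, with the zero matrix mapped to the all-zero word.

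It then remains to verify that this assignment is a bijection. Existence of the normal form is the reduction just described; for uniqueness I would show that if $gM=M'$ with $M,M'$ both in normal form, then $g=\id$. Both matrices share the invariant $j$ and have $j$-th column $\begin{pmatrix}1\\0\end{pmatrix}$, so $g\in U$; if a $k$ occurs, matching the two normalised $k$-th columns forces $b=0$, and if not, $U$ acts trivially, so in either case $M=M'$. Composing the normal-form reduction with the letter encoding yields a well-defined map from the orbit space to $W_p^n$, with inverse sending a word to the orbit of the matrix obtained by decoding its letters, and the two are mutually inverse by construction. The step I expect to demand the most care is the bookkeeping for the degenerate configurations — the rank-one case in which no index $k\le n$ occurs and $U$ already acts trivially, and the all-zero orbit — since these are exactly where the normalisation is vacuous and must be reconciled with the boundary readings of (R1)--(R5).
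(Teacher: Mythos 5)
Your proposal is correct and takes essentially the same route as the paper: the identical letter-to-column encoding $a=u+pv$, the identical canonical forms \eqref{eq:v1} and \eqref{eq:v2}, and a uniqueness argument that is exactly the paper's injectivity computation (forcing $a=1$, $c=0$, $d=1$ and then $b=0$ via the nonzero entry $v_k$), since your unipotent subgroup $U$ is precisely the stabiliser the paper computes inline. The only presentational difference is that you factor the action explicitly through the surjection $\SL(2,\ZZ)\to\SL(2,\ZZ_p)$ and invoke transitivity of $\SL(2,\ZZ_p)$ on $\ZZ_p^2\setminus\{0\}$ abstractly, where the paper stays in $\SL(2,\ZZ)$ and exhibits the integer matrices $S_1$ and $S_2$ by hand.
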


\begin{proof}
   The language under consideration has $p^2$ letters denoted by $0, 1, 2, \dots, p^2-1$.
   Let us define
   $\phi: \{0,1,\dots, p^2-1\} \to \ZZ_p\times \ZZ_p$
   by $\phi(a) =
   \begin{pmatrix}
      u\\ v
   \end{pmatrix}$
   where $u,v$ are the unique elements in $\{0,1,\dots, p-1\}$ such that $a = u + vp$.
   Clearly, $\phi$ is a bijection.
   Now let us define
   \begin{align*}
      f_n &: W_p^n \to (\ZZ_p\times \ZZ_p)^n,
      \quad
      f_n(a_1a_2\dots a_n) = (\phi(a_1), \phi(a_2),\dots, \phi(a_n))
   \end{align*}
   and
   \begin{align*}
      [f_n] &: W_p^n \to (\ZZ_p\times \ZZ_p)^n/\SL(2,\ZZ),
      \quad
      [f_n](a_1a_2\dots a_n) = [f_n(a_1a_2\dots a_n)].
   \end{align*}
   Clearly $f_n$ is well-defined.
   We show that $[f_n]$ is an injection.
   Since a word in $W_p^n$ must satisfy the rules (R1)-(R5), the range of $f_n$ consists exactly of $0$ and the vectors of the form
   \begin{gather}
      \label{eq:v1}
      \begin{pmatrix}
	 0 & \dots\;  0 & 1 & u_{j+1} & \dots & u_{k-1} & 0     & u_{k+1} & \dots &u_n\\
	 0 & \dots\;  0 & 0 & 0       & \dots & 0       & v_{k} & v_{k+1} & \dots &v_n
      \end{pmatrix}
      \intertext{ or }
      \label{eq:v2}
      \begin{pmatrix}
	 0 & \dots & 0 & 1 & u_{j+1} & \dots & u_{n}\\
	 0 & \dots & 0 & 0 & 0       & \dots & 0
      \end{pmatrix}
   \end{gather}
   with $1\le j < k \le n$, $v_k\in\{1,\dots, p-1\}$ and
   $u_\ell, v_i\in \{0, 1, \dots, p-1\}$
   for $\ell\in \{j+1, \dots, n\}\setminus\{k\}$ and $i\in\{k+1,\dots n\}$.
   Let $(u,0)^t$ be of the form \eqref{eq:v2}.
   It is easy to see that in this case
   \begin{align*}
      \left[
      \begin{pmatrix} u\\ 0
      \end{pmatrix}
      \right]
      = \left\{
      \begin{pmatrix} mu\\ nu
      \end{pmatrix}
      \ :\ m,n = 0, 1, \dots, p-1
      \right\}.
   \end{align*}
   This shows that $(u_1, 0)^t,\ (u_2,0)^t$ of the form \eqref{eq:v2} belong to the same equivalence class if and only if $u_1=u_2$ and that they are not equivalent to any element of the form \eqref{eq:v1}.
   Now let $(u,v)^t, (y,z)^t$ of the form \eqref{eq:v1} and assume that they are equivalent.
   Then there exists a matrix
   $S= \begin{pmatrix} a & b \\ c & d
   \end{pmatrix} \in\SL(2,\ZZ)$
   such that $S(u, v)^t = (y, z)^t$.
   Without restriction we may assume $a,b, c,d\in \{0,1,\dots, p-1\}$.
   Since $S$ leaves zero columns invariant, the number of leading columns with only zeros in both vectors must be equal.
   Now, looking at the first non-zero column of the vectors, we find that $S(1,0)^t = (1,0)^t$. 
   Consequently we find $a=1$ and $c=0$.
   Since $\det S =1$, it follows that $d=1$.
   So we showed that
   $\begin{pmatrix} 1 & b \\ 0 & 1
   \end{pmatrix}
   \begin{pmatrix} u\\ v
   \end{pmatrix}
   =
   \begin{pmatrix} y\\ z
   \end{pmatrix}$
   for some $b$.
   In particular, $v=z$ and $u_k=y_k$, where $k$ is as indicated in \eqref{eq:v1}.
   For the $k$th column we obtain
   $\begin{pmatrix} b v_k\\ v_k
   \end{pmatrix}
   =
   \begin{pmatrix} 0\\ z_k
   \end{pmatrix}$.
   Since $v_k\neq 0$, it follows that $b=0$.
   So $S=\id$ and $(u, v)^t = (y, z)^t$.
   We showed that the equivalence classes generated by elements of the form \eqref{eq:v1} and \eqref{eq:v2} are mutually disjoint, so $f_n$ is injective.

   In order to prove that $f_n$ is surjective, we show that every nonzero $(u,v)^t\in (\ZZ_p\times\ZZ_p)^n$ belongs to an equivalence class generated by an element of the form
   \eqref{eq:v1} or \eqref{eq:v2}.
   Let $j\in\{0,\dots, n\}$ such that
   $\begin{pmatrix}
      u\\v
   \end{pmatrix}
   =
   \begin{pmatrix}
      0& \dots & 0 & u_{j} & \dots & u_n \\
      0& \dots & 0 & v_{j} & \dots & v_n
   \end{pmatrix}$
   with $(u_j, v_j)^t \neq (0,0)^t$.
   Without restriction we may assume that $u_j\neq 0$ (note that $(u,v)$ and $(-v,u)^t$ belong to the same equivalence class).
   Choose $m,n\in\ZZ$ such that $m u_j + v_j \equiv 1 \bmod p$ and $u_j+n \equiv 1\bmod p$ and set
   $S_1 =
   \begin{pmatrix}
      1+mn & n \\ m - mn - 1 & 1-n
   \end{pmatrix}$.
   Then $S_1\in\SL(2,\ZZ)$ and
   \begin{align*}
      S_1
      \begin{pmatrix}
	 u\\v
      \end{pmatrix}
      =
      \begin{pmatrix}
	 0& \dots & 0 & 1 & u^1_{j+1} & \dots & u^1_n \\
	 0& \dots & 0 & 0 & v^1_{j+1} & \dots & v^1_n
      \end{pmatrix}.
   \end{align*}
   If $v^1_{i}=0$ for all $i\ge j+1$, this is an element of the form \eqref{eq:v2}.
   Otherwise there is a $k\in\{ j+1, \dots, n\}$ such that $v^1_{i}=0$ for all $i\le k-1$ and $v_k\neq 0$.
   Choose $r\in\ZZ$ such that $u_k+r v_k \equiv 0 \bmod p$ and set
   $S_2 =
   \begin{pmatrix}
      1 & r \\ 0 & 1
   \end{pmatrix}$.
   Clearly $S_2S_1\in\SL(2,\ZZ)$ and
   \begin{align*}
      S_2 S_1
      \begin{pmatrix}
	 u\\v
      \end{pmatrix}
      =
      \begin{pmatrix}
	 0& \dots & 0 & 1 & u^1_{j+1} & \dots & u^1_{k-1} & 0     & u^2_{l+1} & \dots\ u^2_n\\
	 0& \dots & 0 & 0 & 0         & \dots & 0         & v^1_k & v^2_{l+1} & \dots\ u^2_n
      \end{pmatrix}
   \end{align*}
   is an element of the form \eqref{eq:v2}.
\end{proof}

\section{Formula for $|(\ZZ_p\times\ZZ_p)^n/\SL(2,\ZZ)|$} 

By Theorem~\ref{thm:bijection}, we know that $|(\ZZ_p\times\ZZ_p)^n/\SL(2,\ZZ)| = |W_p^n|$.
Moreira and Reis \cite{mor} proved that the density of a language with four letters has the value $(2^n+1)(2^{n-1}+1)/3$.
Theorem~\ref{thm:formula} in this section generalises their result.
\smallskip

\noindent
In the following we set $r(p,n) := |(\ZZ_p\times\ZZ_p)^n/\SL(2,\ZZ)|$ and
$F(p,n) := r(p, n+1) - r(p,n)$ for $n\ge 1$.

\begin{lemma}
   \label{lem:r}
   For any prime number $p$ we have $r(p,1)= 2$ and $r(p,2) = 2p+1$.
\end{lemma}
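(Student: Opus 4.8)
The plan is to count equivalence classes directly using the description of orbits developed in the proof of Theorem~\ref{thm:bijection}, since by that theorem $r(p,n) = |W_p^n|$ and we have an explicit normal form \eqref{eq:v1}--\eqref{eq:v2} together with the zero orbit. The case $n=1$ is almost immediate: the space $\ZZ_p\times\ZZ_p$ has the single zero orbit $\{(0,0)^t\}$, and every nonzero vector lies in the orbit of $(1,0)^t$, since $\SL(2,\ZZ)$ acts transitively on the $p^2-1$ nonzero vectors of the plane $\ZZ_p^2$ (any nonzero vector is the first column of some matrix in $\SL(2,\ZZ_p)$, and reduction $\SL(2,\ZZ)\to\SL(2,\ZZ_p)$ is surjective). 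Hence there are exactly two orbits and $r(p,1)=2$, matching $x_1=2$.

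For $n=2$ I would count the normal-form representatives from \eqref{eq:v1} and \eqref{eq:v2} with $n=2$, which by the bijection is the same as counting $|W_p^2|$. First I count the zero orbit (one class). Next the representatives of type \eqref{eq:v2}, i.e.\ $(1,u_2;0,0)^t$ with $j=1$, $k>n$; here $u_2$ ranges over $\{0,1,\dots,p-1\}$, giving $p$ classes; together with the length-one case $j=2$ this requires care, so I would instead organize the count by the position $j$ of the first nonzero column. If $j=2$ (only the second column is nonzero, first column zero) the normal form forces that column to be $(1,0)^t$, giving one class. If $j=1$, the first column is $(1,0)^t$ and the second column is arbitrary in $\ZZ_p\times\ZZ_p$ subject to the normal-form constraint: either of type \eqref{eq:v2} with second entry $u_2\in\{0,\dots,p-1\}$ (that is $p$ classes), or of type \eqref{eq:v1} with $k=2$, where the second column is $(0,v_2)^t$ with $v_2\in\{1,\dots,p-1\}$ (that is $p-1$ classes).

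Adding the contributions gives $1 + 1 + p + (p-1) = 2p+1$, which is the claimed value of $r(p,2)$. The cleanest way to present this is to invoke Theorem~\ref{thm:bijection} and simply enumerate the normal forms \eqref{eq:v1} and \eqref{eq:v2} for $n=2$, since the bijection guarantees each normal form corresponds to exactly one orbit; I would tabulate them as: the zero word; the word $01$ (i.e.\ $j=2$); the words $1\,u_2$ with $u_2\in\{0,\dots,p-1\}$ corresponding to $(1,u_2;0,0)^t$; and the words $1\,(v_2 p)$ with $v_2\in\{1,\dots,p-1\}$ corresponding to $(1,0;0,v_2)^t$.

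The only genuine subtlety, and the step I would double-check, is confirming that these normal forms are pairwise inequivalent and exhaust all orbits for $n=2$ — but this is exactly the content already proved in Theorem~\ref{thm:bijection}, so no new argument is needed; I am merely specializing the general bijection to $n\in\{1,2\}$ and counting. Thus the main obstacle is purely bookkeeping: making sure the boundary cases (the position $j$ of the first nonzero column, and whether $k\le n$ or $k>n$) are each counted exactly once and none is omitted.
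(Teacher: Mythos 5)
Your proposal is correct and matches the paper's own proof in essence: for $n=2$ the paper likewise enumerates the normal-form representatives from \eqref{eq:v1} and \eqref{eq:v2}, namely the zero class, $[(0,1;0,0)^t]$, the $p$ classes $[(1,m;0,0)^t]$, and the $p-1$ classes $[(1,0;0,n)^t]$, totalling $2p+1$. Your transitivity argument for $r(p,1)=2$ is a valid (slightly more explicit) filling-in of what the paper dismisses as clear.
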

\begin{proof}
   The first assertion is clear.
   For the second assertion note that $(\ZZ_p\times\ZZ_p)^2$ is the disjoint union of $0$ and the orbits generated by vectors of the form \eqref{eq:v1} and \eqref{eq:v2}. Hence all equivalence classes are given by
   \begin{align*}
      \left[
      \begin{pmatrix}
	 0 & 0 \\ 0 & 0
      \end{pmatrix}
      \right],\
      \left[
      \begin{pmatrix}
	 0 & 1 \\ 0 & 0
      \end{pmatrix}
      \right],\
      \left[
      \begin{pmatrix}
	 1 & m \\ 0 & 0
      \end{pmatrix}
      \right],\
      \left[
      \begin{pmatrix}
	 1 & 0 \\ 0 & n
      \end{pmatrix}
      \right]
   \end{align*}
   with $m\in\{0, 1, \dots, p-1\},\ n\in\{1, \dots, p-1\}$ which proves the formula for $r(p,2)$.
\end{proof}
\begin{lemma}
   \label{lem:F}
   $F(p,n)=p^{n-1}(p^n+p-1)$
   for any prime number $p$ and $n\in\NN$.
\end{lemma}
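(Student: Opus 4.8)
The plan is to evaluate $r(p,n)$ in closed form by counting orbit representatives and then to read off $F(p,n)$ as a difference. By the proof of Theorem~\ref{thm:bijection}, every orbit of a nonzero vector has a unique representative of the form \eqref{eq:v1} or \eqref{eq:v2}, so I would write $r(p,n)=1+M_n+F_n$, where the $1$ accounts for the zero orbit, $M_n$ counts the representatives of the form \eqref{eq:v2}, and $F_n$ counts those of the form \eqref{eq:v1}. The whole proof then reduces to counting each of these two families by its free parameters.

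First I would count the family \eqref{eq:v2}. Such a representative is determined by the position $j\in\{1,\dots,n\}$ of the leading $1$ together with the entries $u_{j+1},\dots,u_n\in\{0,\dots,p-1\}$, giving $p^{n-j}$ choices for each $j$. Hence $M_n=\sum_{j=1}^n p^{n-j}=\frac{p^n-1}{p-1}$, and in particular $M_{n+1}-M_n=p^n$. Next I would count the family \eqref{eq:v1}. Here the data are a pair $1\le j<k\le n$, the mid-entries $u_{j+1},\dots,u_{k-1}\in\{0,\dots,p-1\}$ ($p^{k-1-j}$ choices), the entry $v_k\in\{1,\dots,p-1\}$ ($p-1$ choices), and a free pair $(u_i,v_i)\in\ZZ_p\times\ZZ_p$ for each $i$ with $k<i\le n$ (a total of $p^{2(n-k)}$ choices). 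Summing first over $j$ via $\sum_{j=1}^{k-1}p^{k-1-j}=\frac{p^{k-1}-1}{p-1}$ collapses the inner sum, leaving $F_n=\sum_{k=2}^n (p^{k-1}-1)\,p^{2(n-k)}$.

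The one genuinely computational step is evaluating $F_n$: splitting the summand and summing the two resulting geometric series gives a closed form, and the only place where care is needed is in tracking the index ranges of the exponents. Equivalently, isolating the $k=n+1$ term and pulling $p^2$ out of the remaining terms yields the recursion $F_{n+1}=p^2F_n+(p^n-1)$, from which the same closed form follows; I would cross-check it against $F_1=0$ and $F_2=p-1$ coming from Lemma~\ref{lem:r}.

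Finally I would assemble $F(p,n)=r(p,n+1)-r(p,n)=(M_{n+1}-M_n)+(F_{n+1}-F_n)$. Using $F_{n+1}-F_n=(p^n-1)+(p^2-1)F_n$ together with the closed form for $(p^2-1)F_n$ gives $F_{n+1}-F_n=p^{n-1}(p^n-1)$. Adding $M_{n+1}-M_n=p^n$ then yields $F(p,n)=p^{n-1}(p^n-1)+p^n=p^{n-1}(p^n+p-1)$, as claimed. The main obstacle is thus purely the bookkeeping in evaluating $F_n$; the structure is entirely geometric and everything telescopes, so I would finish by confirming the answer against Lemma~\ref{lem:r} in the case $n=1$, where $F(p,1)=2p-1=r(p,2)-r(p,1)$, and against the value $F(2,2)=10$ coming from the sequence in \eqref{eq:sequence}.
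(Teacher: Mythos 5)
Your proposal is correct, but it takes a genuinely different route from the paper's. The paper never enumerates the normal forms \eqref{eq:v1} and \eqref{eq:v2} directly; instead it interprets $F(p,n)=r(p,n+1)-r(p,n)$ as the number of orbits in $(\ZZ_p\times\ZZ_p)^{n+1}$ with nonvanishing first column, splits these into three cases according to the shape of the first two columns (second column zero; reducible to $\left(\begin{smallmatrix} a & 1 \\ 0 & 0\end{smallmatrix}\right)$; reducible to $\left(\begin{smallmatrix} 0 & 1 \\ a & 0\end{smallmatrix}\right)$ with $a\neq 0$), and obtains the recursion $F(p,n)=pF(p,n-1)+p^{2n-2}(p-1)$, which it then solves by induction from the base case $F(p,1)=2p-1$ of Lemma~\ref{lem:r}. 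You instead exploit the uniqueness of normal forms established in the proof of Theorem~\ref{thm:bijection} to write $r(p,n)=1+M_n+F_n$ and count each family by its free parameters; I checked your bookkeeping, and it is sound: $M_n=(p^n-1)/(p-1)$, the sum over $j$ collapses as you say, $(p^2-1)F_n=p^{2n-1}-p^n-p^{n-1}+1$, and hence $F_{n+1}-F_n=p^{n-1}(p^n-1)$ and $F(p,n)=p^n+p^{n-1}(p^n-1)=p^{n-1}(p^n+p-1)$. What your approach buys is that it is fully explicit and actually computes $r(p,n)$ in closed form along the way, so it would make Lemma~\ref{lem:r} and the telescoping in the proof of Theorem~\ref{thm:formula} essentially redundant; it also avoids the paper's somewhat informal appeal, in its second case, to a bijection between orbit families. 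What the paper's approach buys is lighter arithmetic (a first-order recursion rather than your geometric-series manipulations) and independence from the full strength of the normal-form uniqueness. One cosmetic caution: your symbol $F_n$ (the count of type-\eqref{eq:v1} forms) clashes with the paper's $F(p,n)$, and these are different quantities, so in a final write-up you should rename one of them.
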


\begin{proof}
   Note that the formula holds for $n=1$ because $F(1) = r(p,2)-r(p,1) = 2p-1$ by Lemma~\ref{lem:r}.
   We will prove that
   \begin{align}
      \label{eq:ind}
      F(p,n) = p F(p, n-1) + p^{2n-2}(p-1),
      \qquad n\in\NN.
   \end{align}
   The conclusion follows then easily by an induction on $n$.
   \smallskip

   Recall that $F(p,n) = r(p,n+1)-r(p, n)$ and observe that
   $(\ZZ_p\times\ZZ_p)^{n}$ can be viewed as the subspace of all vectors in $(\ZZ_p\times\ZZ_p)^{n+1}$ whose first column consists of $0$ only.
   Since $\SL(2,\ZZ)$ leaves zero columns invariant, $F(p,n)$ is the number of different orbits in $(\ZZ_p\times\ZZ_p)^{n+1}$ whose representatives have nonvanishing first column.
   Note that if the first column is not vanishing, we can always choose a representative whose first column is $(1,0)^t$.

   Now let $(u,v)^t$ be a vector in $(\ZZ_p\times\ZZ_p)^{n+1}$ with nonvanishing first column.
   Then this vector belongs to exactly one of the following cases:
   \begin{enumerate}
      \item
      The second column is vanishing.
      Then there exists $S\in\SL(2,\ZZ)$ such that
      $S \begin{pmatrix} u \\ v
      \end{pmatrix}=
      \begin{pmatrix} 1 & 0 & \dots\\ 0 & 0 & \dots
      \end{pmatrix}$.
      Clearly, the orbits generated by such vectors in $(\ZZ_p\times\ZZ_p)^{n+1}$ with nonvanishing first column and vanishing second column correspond bijectively to the orbits of vectors in $(\ZZ_p\times\ZZ_p)^{n}$ with nonvanishing first column.
      Hence there are $F(p, n-1)$ distinct orbits of this type.

      \item
      The second column is nonvanishing and there exists $S\in\SL(2,\ZZ)$ such that
      $S \begin{pmatrix} u \\ v
      \end{pmatrix}=
      \begin{pmatrix} a & 1 & \dots\\ 0 & 0 & \dots
      \end{pmatrix}$
      with some $a\in\{1, 2, \dots, p-1\}$.
      For fixed $a$, there exists a bijection between orbits generated such an elements and the orbits generated by vectors with nonvanishing first column in $(\ZZ_p\times\ZZ_p)^{n}$.
      Clearly orbits with different $a$'s are disjoint (see the proof of Theorem~\ref{thm:bijection}), we have
      $(p-1)F(p, n-1)$ different orbits generated by elements of this type.

      \item
      The second column is nonvanishing and there exists $S\in\SL(2,\ZZ)$ such that
      $S \begin{pmatrix} u \\ v
      \end{pmatrix}=
      \begin{pmatrix} 0 & 1 & \dots\\ a & 0 & \dots
      \end{pmatrix}$
      with some $a\in\{1, 2, \dots, p-1\}$.
      It is easy to see that such vectors with different $a$'s generate disjoint orbits, and that vectors with the same $a$ but different tails (the last $n-1$ columns) also generate disjoint orbits.
      Since there are $(p^2)^{n-1}$ different tails, the number of orbits generated by vectors of the form above is
      $(p-1)p^{2n-2}$.
   \end{enumerate}

   In total the number of orbits generated by vectors in $(\ZZ_p\times \ZZ_p)^{n+1}$ with nonvanishing first column is
   $F(p, n-1) + (p-1) F(p, n-1) + (p-1)p^{2n-2} = pF(p, n-1) + (p-1)p^{2n-2}$.
\end{proof}

\begin{theorem}\label{thm:formula}
   For any prime number $p$ and $n\in\NN$ we have
   \begin{equation}
      r(p,n)=\frac{p^{2n-1}+p^{n+1}-p^{n-1}+p^2-p-1}{p^2-1}\,.
   \end{equation}
\end{theorem}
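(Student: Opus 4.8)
The plan is to read off everything from the two preceding lemmas and then do a short summation. Since by definition $F(p,k) = r(p,k+1) - r(p,k)$, the sum telescopes, so for every $n\in\NN$ I can recover $r(p,n)$ from the initial value $r(p,1)$:
\begin{align*}
   r(p,n) = r(p,1) + \sum_{k=1}^{n-1} F(p,k)
          = 2 + \sum_{k=1}^{n-1} p^{k-1}\bigl(p^k + p - 1\bigr),
\end{align*}
where the value $r(p,1)=2$ comes from Lemma~\ref{lem:r} and the summand from Lemma~\ref{lem:F}. For $n=1$ the sum is empty and returns $r(p,1)=2$, so the base case is automatic.

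Next I would expand the summand as $p^{k-1}(p^k+p-1) = p^{2k-1} + p^k - p^{k-1}$ and evaluate the three resulting sums separately. The pair $p^k - p^{k-1}$ telescopes, contributing $p^{n-1}-1$. The remaining piece $\sum_{k=1}^{n-1} p^{2k-1}$ is a geometric series with ratio $p^2$; since $p$ is prime we have $p^2-1\neq 0$, and the series evaluates to $(p^{2n-1}-p)/(p^2-1)$. Collecting these gives
\begin{align*}
   r(p,n) = 2 + \frac{p^{2n-1}-p}{p^2-1} + \bigl(p^{n-1}-1\bigr).
\end{align*}

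Finally I would put everything over the common denominator $p^2-1$, using $(1+p^{n-1})(p^2-1) = p^{n+1}-p^{n-1}+p^2-1$, and add $p^{2n-1}-p$ in the numerator to arrive at $p^{2n-1}+p^{n+1}-p^{n-1}+p^2-p-1$, which is exactly the claimed closed form. As an independent check one could instead argue by induction: verify the formula at $n=1$ (and perhaps at $n=2$ against $r(p,2)=2p+1$ from Lemma~\ref{lem:r}) and confirm that the proposed expression satisfies the difference relation $r(p,n+1)-r(p,n) = p^{n-1}(p^n+p-1)$ of Lemma~\ref{lem:F}. Either route is purely computational, and that algebraic bookkeeping of the geometric sum is the only real obstacle; there is no conceptual difficulty once Lemmas~\ref{lem:r} and~\ref{lem:F} are in hand.
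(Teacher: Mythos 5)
Your proposal is correct and follows essentially the same route as the paper: telescope $F(p,k)=r(p,k+1)-r(p,k)$ from $r(p,1)=2$ using Lemma~\ref{lem:r}, substitute the closed form of Lemma~\ref{lem:F}, and evaluate the resulting geometric sums. Your splitting of the summand as $p^{2k-1}+p^k-p^{k-1}$ (with the last two terms telescoping) is only a cosmetic variant of the paper's grouping $p\sum p^{2j}+(p-1)\sum p^j$, and your algebra checks out.
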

\begin{proof}
   As before let $F(p,n) = r(p, n+1) - r(p,n)$.
   By Lemma~\ref{lem:F} we have that
   \begin{equation}
      \label{from1}
      F(p,n)=p^{n-1}(p^n+p-1)\,.
   \end{equation}
   This implies
   \begin{align*}
      r(p,n)
     &  = r(p,1) + F(p,1) + \dots + F(p,n-1)
     = 2 + \sum_{j=1}^{n-1} p^{j-1}(p^j+p-1)
     \\
     & = 2 + p\sum_{j=0}^{n-2} p^{2j} + (p-1)\sum_{j=0}^{n-2} p^{j}
     = 2 + \frac{p(1-p^{2n-2})}{ 1- p^2} - (1-p^{n-1})
     \\
     & =
     \frac{p^{2n-1} + p^{n+1} - p^{n-1} + p^2 - p -1}{p^2-1}.
     \qedhere
   \end{align*}
\end{proof}

\begin{cor}
   \label{cor:MoreiraReisFormula}
   For $p=2$ we obtain the formula of Moreira and Reis~{\upshape\cite{mor}}
   \begin{align*}
      r(2,n)
      &=(2^{2n-1}+2^{n+1}-2^{n-1}+1)/3
      \\
      &=(2^{2n-1}+2^{n}+2^{n-1}+1)/3
      \\
      &=(2^n+1)(2^{n-1}+1)/3.
   \end{align*}
\end{cor}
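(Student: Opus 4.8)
The plan is to obtain the corollary directly from Theorem~\ref{thm:formula} by specializing to $p=2$ and simplifying; since the general closed form is already established, no new combinatorial argument is needed and the work is purely algebraic. First I would substitute $p=2$ into
\begin{equation*}
   r(p,n)=\frac{p^{2n-1}+p^{n+1}-p^{n-1}+p^2-p-1}{p^2-1},
\end{equation*}
which turns the denominator into $2^2-1=3$ and collapses the three constant terms of the numerator into $2^2-2-1=1$. This produces the first displayed equality
\begin{equation*}
   r(2,n)=\frac{2^{2n-1}+2^{n+1}-2^{n-1}+1}{3}.
\end{equation*}

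Next I would rewrite the two middle powers of the numerator. Since $2^{n+1}=4\cdot 2^{n-1}$, we have $2^{n+1}-2^{n-1}=3\cdot 2^{n-1}=2^n+2^{n-1}$, which gives the second displayed equality
\begin{equation*}
   r(2,n)=\frac{2^{2n-1}+2^{n}+2^{n-1}+1}{3}.
\end{equation*}
The final equality is then a factorization check: expanding the product yields $(2^n+1)(2^{n-1}+1)=2^n\cdot 2^{n-1}+2^n+2^{n-1}+1=2^{2n-1}+2^{n}+2^{n-1}+1$, which is exactly the numerator above. Dividing by $3$ completes the chain of identities.

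I do not expect any genuine obstacle here, as the corollary is nothing more than the $p=2$ instance of Theorem~\ref{thm:formula}. The only points requiring care are the bookkeeping of exponents, in particular the identity $2^{n+1}-2^{n-1}=2^n+2^{n-1}$ and the observation that $2^n\cdot 2^{n-1}=2^{2n-1}$, both of which are elementary. Thus the argument reduces to a short and routine computation.
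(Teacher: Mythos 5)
Your proposal is correct and matches the paper exactly: the corollary is stated there without a separate proof precisely because it is the $p=2$ specialization of Theorem~\ref{thm:formula}, with the displayed chain of equalities carrying out the same simplifications you describe ($2^2-2-1=1$, $2^{n+1}-2^{n-1}=2^n+2^{n-1}$, and the factorization $(2^n+1)(2^{n-1}+1)=2^{2n-1}+2^n+2^{n-1}+1$). Your exponent bookkeeping is accurate throughout, so nothing needs to be added.
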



\section{More occurrences of the sequence $(2,5,15,\dots)$}\label{sec3}
\subsection{Dual polar space}
Let $V$ be a vector space of dimension $2n$ over the field $\mathbb F_2$ with the standard symplectic form.
Then every maximal totally isotropic subspace of $V$ has dimension $n$.
It can be shown that every totally isotropic subspace of $V$ of dimension $n-1$ is contained in exactly three maximal totally isotropic subspaces.

Let us consider the configuration $(X,L)$ 
where the set of \emph{points} $X$ consists of the maximal totally isotropic subspaces of $V$, the set of \emph{lines} $L$ consists of the totally isotropic spaces of dimension $n-1$,
and a point $U$ lies on the line $W$ if and only if $W\subseteq U$ as vector spaces.
Thus every line contains exactly three points.
This point-line geometry is called \emph{dual polar space} $\mathrm{Sp}_{2n}(2)$.

If $n=1$, then
$X=\{
\langle\left(
\begin{smallmatrix}
   1 \\ 0
\end{smallmatrix}
\right)\rangle,
\langle\left(
\begin{smallmatrix}
   0 \\ 1
\end{smallmatrix}
\right)\rangle,
\langle\left(
\begin{smallmatrix}
   1 \\ 1
\end{smallmatrix}
\right)\rangle
\}$
and $L = \{0\}$.
If $n=2$, then $X$ and $L$ have 15 elements each and they form the so-called \emph{Cremona-Richmond} configuration, see Figure~\ref{fig:CR}.
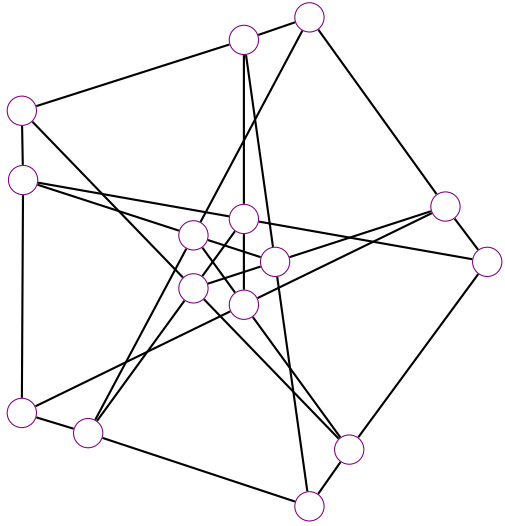
\begin{figure}
   \begin{center}
   \begin{tikzpicture}[scale=.6] 
      \node[draw,circle, violet, name=a] at (0*72:1) {};  
      \node[draw,circle, violet, name=b] at (1*72:1) {};  
      \node[draw,circle, violet, name=c] at (2*72:1) {};  
      \node[draw,circle, violet, name=d] at (3*72:1) {};  
      \node[draw,circle, violet, name=e] at (4*72:1) {};  

      \node[draw,circle, violet, name=aaa] at (0*72:5.7) {};   
      \node[draw,circle, violet, name=bbb] at (1*72:5.7) {};   
      \node[draw,circle, violet, name=ccc] at (2*72:5.7) {};   
      \node[draw,circle, violet, name=ddd] at (3*72:5.7) {};   
      \node[draw,circle, violet, name=eee] at (4*72:5.7) {};   


      \path[green, name path=aaab] (aaa)--($(b)!-8cm!(aaa)$);
      \path[red,   name path=ac] (a)--($(c)!-6cm!(a)$);
      \path [name intersections={of={aaab and ac}}];
      \coordinate (rrr) at (intersection-1);
      \node[draw, circle, violet, name=R] at (rrr) {};

      \path[green, name path=bbbc] (bbb)--($(c)!-8cm!(bbb)$);
      \path[red,   name path=bd] (b)--($(d)!-6cm!(b)$);
      \path [name intersections={of={bbbc and bd}}];
      \coordinate (sss) at (intersection-1);
      \node[draw, circle, violet, name=S] at (sss) {};

      \path[green, name path=cccd] (ccc)--($(d)!-8cm!(ccc)$);
      \path[red,   name path=ce] (c)--($(e)!-6cm!(c)$);
      \path [name intersections={of={cccd and ce}}];
      \coordinate (ttt) at (intersection-1);
      \node[draw, circle, violet, name=T] at (ttt) {};

      \path[green, name path=ddde] (ddd)--($(e)!-8cm!(ddd)$);
      \path[red,   name path=da] (d)--($(a)!-6cm!(d)$);
      \path [name intersections={of={ddde and da}}];
      \coordinate (uuu) at (intersection-1);
      \node[draw, circle, violet, name=U] at (uuu) {};

      \path[green, name path=eeea] (eee)--($(a)!-8cm!(eee)$);
      \path[red,   name path=eb] (e)--($(b)!-6cm!(e)$);
      \path [name intersections={of={eeea and eb}}];
      \coordinate (vvv) at (intersection-1);
      \node[draw, circle, violet, name=V] at (vvv) {};

      \draw[thick] (a)--(c); \draw[thick](c)--(R);
      \draw[thick] (b)--(d); \draw[thick](d)--(S);
      \draw[thick] (c)--(e); \draw[thick](e)--(T);
      \draw[thick] (d)--(a); \draw[thick](a)--(U);
      \draw[thick] (e)--(b); \draw[thick](b)--(V);

      \draw[thick] (aaa)--(T); \draw[thick](T)--(eee);
      \draw[thick] (bbb)--(U); \draw[thick](U)--(aaa);
      \draw[thick] (ccc)--(V); \draw[thick](V)--(bbb);
      \draw[thick] (ddd)--(R); \draw[thick](R)--(ccc);
      \draw[thick] (eee)--(S); \draw[thick](S)--(ddd);

      \draw[thick] (aaa)--(b); \draw[thick](b)--(R);
      \draw[thick] (bbb)--(c); \draw[thick](c)--(S);
      \draw[thick] (ccc)--(d); \draw[thick](d)--(T);
      \draw[thick] (ddd)--(e); \draw[thick](e)--(U);
      \draw[thick] (eee)--(a); \draw[thick](a)--(V);

   \end{tikzpicture}
   \end{center}
\caption{\label{fig:CR}Cremona-Richmond configuration.}
\end{figure}
\smallskip

Let $\mathbb F_2 L$ and $\mathbb F_2 X$ be the $\mathbb F_2$-vector spaces freely generated by $L$ and $X$, respectively.
Consider the map $\sigma:\mathbb F_2 L\to \mathbb F_2 X$ which sends every line of the graph to the sum of its three elements.
The dimension of the universal embedding is given by the $\mathbb F_2$-dimension of the quotient $\op{udim}:= \dim (\mathbb F_2 X/\sigma\mathbb F_2 L)$.
For $n=1$, $\op{udim}=2$; for $n=2$, $\op{udim}=5$.
This can be easily verified in the configuration of Figure~\ref{fig:CR} by the following procedure:
It is possible to mark $5$ vertices such that, whenever there is a line with already two vertices marked, the missing one is marked too, in the end, all vertices will be marked.

\noindent
For $n=3$ the configuration is unknown but it is known that it has $135$ vertices and $80$ lines.
We conjecture that \eqref{ca3}, \eqref{ca2} and \eqref{ca4} can be used to find a shortcut for the construction of these configurations.
More details can be found in \cite{carlos2}.

\subsection{Cobordism categories}
For an abelian finite group $G$ of order $n<\infty$ we consider the \emph{$G$-cobordism category} in dimension $1+1$.
Its objects are finite sequences $(g_1,\cdots, g_m)$ of elements in $G$.
Each $g\in G$ defines an $n$-fold covering of the unit circle by taking the product $G\times [0,1]$ with the identification $(h,0)\sim(h+g,1)$ for every $h\in G$.
So geometrically, the objects of the category can be represented as disjoint unions of circles.
The morphisms of the category are the gluing together of elementary cobordisms with labels as in Figure \ref{fig:elementary} up to certain identifications.
We do not yet have a short combinatorial description of all morphisms; this matter will be discussed in a forthcoming work.

\begin{figure} 
   \hspace*{\fill}
   \parbox[c]{.3\textwidth}{
   \includegraphics{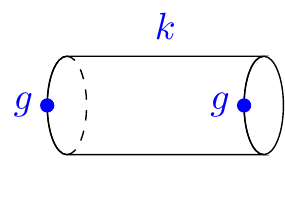}
   }
   \hspace{\fill}
   \parbox[c]{.3\textwidth}{
   \includegraphics{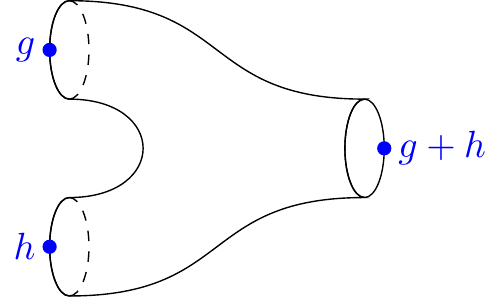}
   }
   \hspace{\fill}
   \parbox[c]{.1\textwidth}{
   \includegraphics{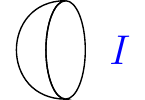}
   }
   \hspace*{\fill}
   \caption{\label{fig:elementary}Elementary components of cobordisms: cylinder, pair of pants and disc.}
\end{figure}

The importance of the $G$-cobordism category in dimension $1+1$ is its algebraic counterpart given by the definition of a $G$-Frobenius algebra, see \cite{a10,kaufmann}.
The relation between a geometric and an algebraic object is interpreted in functorial terms by what is called a $G$-equivariant topological field theory, see \cite{a10}. 
Recently, the study of the fundamental group associated to the classifying space of this category gave an interesting invariant for the group $G$.
Indeed, for a finite abelian group $G$ the first author \cite{carlos1} proves that this group is isomorphic to $\ZZ^{r(G)}:=\bigoplus_{i=1}^{r(G)}\ZZ$, 
where $r(G)$ is the cardinality of the quotient $(G\times G)/\SL(2,\ZZ)$, see \cite{carlos1}.
Therefore, for the special case $G=\ZZ_p^n$, we have $r(\ZZ_p^n)=r(p,n)$ and
Theorem~\ref{thm:formula} leads to the following theorem.
\begin{theorem}
   \label{thm:fundgroup}
   Let $G=\ZZ_p^n$ for some prime number $p$ and $n\in\NN$.
   Then the fundamental group of the classifying space of the $G$-cobordism category is isomorphic to $\ZZ^{r(G)}$, where
   \begin{align*}
      r(G)
      = r(\ZZ_p^n)
      =\frac{p^{2n-1}+p^{n+1}-p^{n-1}+p^2-p-1}{p^2-1}\,.
   \end{align*}
\end{theorem}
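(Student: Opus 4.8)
The plan is to deduce the statement directly from the two ingredients already assembled above: the topological computation of \cite{carlos1} and the closed formula of Theorem~\ref{thm:formula}. Recall from \cite{carlos1} that for an arbitrary finite abelian group $G$ the fundamental group of the classifying space of the $G$-cobordism category in dimension $1+1$ is the free abelian group $\ZZ^{r(G)}$, where $r(G)=|(G\times G)/\SL(2,\ZZ)|$ denotes the number of orbits of the left $\SL(2,\ZZ)$-action on $G\times G$. Thus the entire theorem reduces to computing this orbit count for $G=\ZZ_p^n$, and everything hinges on identifying that orbit set with the one studied in the earlier sections.

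First I would set up a canonical isomorphism of $\SL(2,\ZZ)$-sets
\[
   G\times G=\ZZ_p^n\times\ZZ_p^n\;\cong\;(\ZZ_p\times\ZZ_p)^n,
\]
given by the shuffle $(g,h)\mapsto((g_1,h_1),\dots,(g_n,h_n))$, where $g=(g_1,\dots,g_n)$ and $h=(h_1,\dots,h_n)$. The crucial observation is that this reindexing is equivariant: writing $S=\begin{pmatrix} a & b \\ c & d\end{pmatrix}\in\SL(2,\ZZ)$, the action on $G\times G$ sends $(g,h)$ to $(ag+bh,\,cg+dh)$, and under the shuffle this becomes precisely the coordinate-wise column action $(g_i,h_i)^t\mapsto S(g_i,h_i)^t$ fixed in Section~\ref{subsec:Z}. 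Consequently the two $\SL(2,\ZZ)$-sets have the same orbit spaces, so that $r(G)=|(\ZZ_p\times\ZZ_p)^n/\SL(2,\ZZ)|=r(p,n)$.

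Once this identification is in place, the proof finishes at once: Theorem~\ref{thm:formula} already evaluates $r(p,n)$ to the stated rational expression, so substituting it into the exponent of $\ZZ^{r(G)}$ gives the displayed formula for the fundamental group. The only place demanding any care---and hence the nearest thing to an obstacle---is the equivariance check in the middle step, namely confirming that the $\SL(2,\ZZ)$-action used in \cite{carlos1} on $G\times G$ matches the row action on $(\ZZ_p\times\ZZ_p)^n$ adopted here. This is purely a matter of unwinding definitions rather than a genuine difficulty; with it settled, the theorem is an immediate corollary of \cite{carlos1} together with Theorem~\ref{thm:formula}.
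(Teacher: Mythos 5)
Your proposal is correct and follows essentially the same route as the paper, which likewise invokes \cite{carlos1} for the isomorphism $\pi_1\cong\ZZ^{r(G)}$ with $r(G)=|(G\times G)/\SL(2,\ZZ)|$ and then substitutes the formula of Theorem~\ref{thm:formula} via the identification $r(\ZZ_p^n)=r(p,n)$. The only difference is that you spell out the $\SL(2,\ZZ)$-equivariance of the shuffle bijection $\ZZ_p^n\times\ZZ_p^n\cong(\ZZ_p\times\ZZ_p)^n$, a verification the paper treats as immediate.
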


\subsection{Topological field theory}
The sequence \eqref{eq:sequence} appears in topological field theory (TFT) in connection with the classification of the invertible TFTs (see \cite{til3}) and more recently for the unoriented case (see \cite{loca}).
We denote by $\cob^{\ZZ_2^n}$ the $\ZZ_2^n$-cobordism category in dimension $1+1$ (see \cite{carlos1}).
A \emph{topological field theory} is a symmetric monoidal functor from $\cob^{\ZZ_2^n}$ to the category of vector spaces. We denote the category of TFTs by

\begin{equation*}
\cob^{\ZZ_2^n}-\op{SymmMon}[\cob^{\ZZ_2^n},\op{Vect}_\CC]_\ast\,.
\end{equation*}
When we take the invertible ones, we have to pass to the Picard category of the vector spaces which is identified with
$\CC^\times:=\CC\setminus\{0\}$.
Thus the category of invertible TFTs is

\begin{equation*}
   \cob^{\ZZ_2^n}-\op{SymmMon}[\cob^{\ZZ_2^n},Pic(\op{Vect}_\CC)]_\ast\,.
\end{equation*}
The category of fractions of $\cob^{\ZZ_2^n}$ is equivalent
to the fundamental group of its classifying space which has the form $\ZZ^{r(\ZZ_2^n)}$.
In \cite{carlos1}, it was proved that
$r(\ZZ_2^n)=(2^n+1)(2^{n-1}+1)/3$, that is, the sequence \eqref{eq:sequence}. Thus we have that
\begin{equation*}
\cob^{\ZZ_2^n}-\op{SymmMon}[\cob^{\ZZ_2^n},Pic(\op{Vect}_\CC)]_\ast\simeq [\ZZ^{r(\ZZ_2^n)},\CC^\times]={(\CC^\times)}^{r(\ZZ_2^n)}
\end{equation*}
where $\ast$ means based monoidal functors.


\bibliographystyle{alpha}
\bibliography{lit-SeWi-2014}

\begin{thebibliography}{Kau03}

\bibitem[BB03]{brow}
A.~Blokhuis and A.~E. Brouwer.
\newblock The universal embedding dimension of the binary symplectic dual polar
  space.
\newblock {\em Discrete Math.}, 264(1-3):3--11, 2003.
\newblock The 2000 ${\rm{C}}om{\sp{2}}MaC$ Conference on Association Schemes,
  Codes and Designs (Pohang).

\bibitem[HK93]{jin}
S.~Hong and Jin~H. Kwak.
\newblock Regular fourfold coverings with respect to the identity automorphism.
\newblock {\em J. Graph Theory}, 17(5):621--627, 1993.

\bibitem[JT13]{loca}
R.~Juer and U.~Tillmann.
\newblock Localisations of cobordism categories and invertible {TFT}s in
  dimension two.
\newblock {\em Homology Homotopy Appl.}, 15(2):195--225, 2013.

\bibitem[Kau03]{kaufmann}
R.~M. Kaufmann.
\newblock Orbifolding {F}robenius algebras.
\newblock {\em International Journal of Mathematics}, 14, Issue: 6:573--617,
  2003.

\bibitem[Li01]{li}
P.~Li.
\newblock On the universal embedding of the {${\rm Sp}\sb {2n}(2)$} dual polar
  space.
\newblock {\em J. Combin. Theory Ser. A}, 94(1):100--117, 2001.

\bibitem[MR05]{mor}
N.~Moreira and R.~Reis.
\newblock On the density of languages representing finite set partitions.
\newblock {\em J. Integer Seq.}, 8(2):Article 05.2.8, 11, 2005.

\bibitem[MS06]{a10}
G.~W. Moore and G.~Segal.
\newblock D-branes and {K}-theory in 2{D} to\-po\-lo\-gi\-cal field theory.
\newblock \url{http://arxiv.org/abs/hep-th/0609042}, September 2006.

\bibitem[Seg]{carlos2}
C.~Segovia.
\newblock Counting words with vector spaces.
\newblock preprint.

\bibitem[Seg12]{carlos1}
C.~Segovia.
\newblock The classifying space of the $1+1$ dimesional ${G}$-cobordism
  category.
\newblock \arxiv{http://arxiv.org/abs/1211.2144}, November 2012.

\bibitem[Slo]{oeis}
N.~J.~A. Sloane.
\newblock The on-line encyclopedia of integer sequences.
\newblock Electronically.
\newblock \url{https://oeis.org/}.

\bibitem[Til96]{til3}
U.~Tillmann.
\newblock The classifying space of the {$(1+1)$}-dimensional cobordism
  category.
\newblock {\em J. Reine Angew. Math.}, 479:67--75, 1996.

\end{thebibliography}

\end{document}